\def\N{\mathbb{N}}
\def\Z{\mathbb{Z}}
\theoremstyle{theorem}
\newtheorem{theorem}{Theorem}
\newtheorem{lemma}[theorem]{Lemma}
\theoremstyle{definition}
\begin{document}

\title{Are the Catalan Numbers a Linear Recurrence Sequence?}
\markright{The Catalan numbers}
\author{Martin Klazar and Richard Horsk\'y}

\maketitle

\begin{abstract}
We answer the question in the title in the negative by providing four proofs.
\end{abstract}

\section{Introduction.}

Let $\mathbb{N}=\{1,2,\dots\}$ be the set of natural numbers, $\mathbb{N}_0=\{0,1,2,\dots\}$ be the nonnegative integers, $\Z$ be all
integers, $\mathbb{Q}$ be the field 
of fractions, and $\mathbb{C}$ be the field of complex numbers. For $n\in\N$ we set $[n]=\{1,2,\dots,n\}$. The Catalan numbers
$$
(C_n)_{n\ge1}=(1,\,1,\,2,\,5,\,14,\,42,\,132,\,429,\,1430,\,4862,\,16796,\,58786,\,\dots)
$$
are among the most popular sequences in enumerative combinatorics. In \cite{stan_CN}, R.~P.~Stanley gives more than
$200$ mostly combinatorial 
interpretations of the sequence $(C_n)$. What would be their simplest self-contained combinatorial definition? Perhaps, for $n\in\mathbb{N}$ 
and $n\ge2$, 
$${\textstyle
C_n=\big|\{\overline{a}\in\{-1,1\}^{2n-2}\;|\;\sum_{j=1}^k a_j\ge0\text{ for }k\in[2n-3]\text{ and }\sum_{j=1}^{2n-2}a_j=0\}\big|\;.
}
$$
Here $\overline{a}=(a_1,a_2,\dots,a_{2n-2})$ and $|X|$ denotes cardinality of a finite set $X$. Thus $C_n$ counts the words built from $n-1$ 
ones and $n-1$ minus ones and with all initial sums nonnegative. 
Starting from this definition, it is not hard to deduce the recurrence $C_1=1$ and for $n\ge2$,
$$
C_n=\sum_{j=1}^{n-1}C_jC_{n-j}
$$
(consider the initial segments with sums equal to $1$)
and also the closed formula $C_n=\frac{1}{n}\binom{2n-2}{n-1}$. From it we easily derive another recurrence, namely $C_1=1$ and 
for $n\ge1$,
$$
C_{n+1}=\frac{4n-2}{n+1}\cdot C_n\;.
$$
Could it be that the $C_n$ follow a yet simpler recurrence that for every $n\ge1$,
\begin{equation}\label{one}
    C_{n+k}=\sum_{j=0}^{k-1} a_jC_{n+j}
\end{equation}
where $k\in\mathbb{N}_0$ and the $a_j\in\mathbb{C}$ are constants? For $k=0$ the sum is defined as $0$. As Theorem~\ref{field_exte} in Section~\ref{sec_NT} 
shows, without loss of generality we  may take $a_j\in\mathbb{Q}$.

\begin{theorem}
The answer is negative, $(C_n)$ is not a \emph{linear recurrence sequence}
as the numbers $C_n$ do not satisfy any recurrence of the form $(\ref{one})$.
\end{theorem}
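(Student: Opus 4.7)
The plan is to combine the exact first-order recurrence $(n+1)C_{n+1}=(4n-2)C_n$ (a rearrangement of the identity stated in the introduction) with the structure theorem for constant-coefficient linear recurrences and with Stirling's asymptotic for $C_n$. Assume for contradiction that $(C_n)$ satisfies a relation of the form~\eqref{one}. Then by the classical theory, there exist distinct nonzero $\al_1,\ldots,\al_r\in\C$ (the distinct roots of the characteristic polynomial) and polynomials $p_1,\ldots,p_r\in\C[x]$ with
$$
C_n=\sum_{i=1}^r p_i(n)\,\al_i^n \qquad (n\ge 1).
$$

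I will substitute this expression into $(n+1)C_{n+1}-(4n-2)C_n=0$ to obtain $\sum_{i=1}^r q_i(n)\,\al_i^n\equiv 0$, where
$$
q_i(n):=(n+1)\al_i\,p_i(n+1)-(4n-2)p_i(n)\in\C[n].
$$
Invoking the classical linear independence of the sequences $n^k\al^n$ for $k\in\N_0$ and $\al\in\C\setminus\{0\}$, each $q_i$ must vanish identically. For every $i$ with $p_i\not\equiv 0$, let $c_i\ne 0$ denote its leading coefficient; both $(n+1)\al_i p_i(n+1)$ and $(4n-2)p_i(n)$ are polynomials in $n$ of degree $\deg p_i+1$, with respective leading coefficients $\al_i c_i$ and $4c_i$, so $q_i\equiv 0$ forces $\al_i=4$. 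Because the $\al_i$ are pairwise distinct, at most one of them equals $4$, so at most one $p_i$ is nonzero; hence $C_n=p(n)\,4^n$ for some $p\in\C[n]$.

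Finally, Stirling applied to $C_n=\tfrac{1}{n}\binom{2n-2}{n-1}$ gives $C_n\sim 4^n/(4\sqrt{\pi}\,n^{3/2})$, whence $p(n)=C_n/4^n\to 0$ as $n\to\infty$. A polynomial with limit zero is identically zero, so $C_n\equiv 0$, contradicting $C_1=1$.

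The only delicate point I anticipate is the linear-independence step used to extract $q_i\equiv 0$ from $\sum_i q_i(n)\al_i^n\equiv 0$. It is standard and may be proved by induction on $r$ via the finite-difference operators $E-\al_j I$ (with $E$ the shift $n\mapsto n+1$), which reduce the number of distinct roots by one at a time; everything else (Stirling and the degree comparison) is routine.
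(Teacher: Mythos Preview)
Your argument is correct and differs from all four proofs in the paper. Its closest relative is the asymptotic proof of Section~\ref{sec_asym}: both invoke the power-sum form $C_n=\sum_i p_i(n)\alpha_i^n$ and finish with Stirling's estimate $C_n\sim c\,n^{-3/2}4^n$. The paper, however, never uses the holonomic relation $(n+1)C_{n+1}=(4n-2)C_n$; instead it proves a general lemma, via Cramer's rule and a Vandermonde determinant, that the leading oscillatory part $v(n)=\sum_j\gamma_j\beta_j^n$ of any nonzero power sum satisfies $\limsup_n|v(n)|>0$, and then notes that this is incompatible with the factor $n^{-3/2}$. Your route is shorter for this particular sequence: feeding the power sum into the first-order recurrence and comparing leading coefficients pins every root to $4$, whence $C_n=p(n)4^n$ and Stirling disposes of $p$. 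What the paper's approach buys is a reusable statement about arbitrary linear recurrence sequences; what yours buys is a cleaner argument tailored to the Catalan numbers' holonomic structure. (As a side remark, you could even dispense with Stirling: substituting $C_n=p(n)4^n$ back into $(n+1)C_{n+1}=(4n-2)C_n$ gives $2(n+1)p(n+1)=(2n-1)p(n)$, and comparing the next-to-leading coefficients forces $2\deg p+3=0$.)
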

In the following four sections we give four proofs. \emph{Linear recurrence sequences} are sequences 
$$
(b_n)=(b_1,\,b_2,\dots)\subset\mathbb{C}
$$ 
(or with elements lying in another field) satisfying for every $n\in\mathbb{N}$ a recurrence of the form (\ref{one}). They appear frequently 
in enumerative combinatorics and number theory and 
the monograph \cite{ever} is devoted to them. One of the best known examples are the Fibonacci numbers
$(F_n)=(1,1,2,3,5,8,13,21,\dots)$, satisfying for every $n\in\mathbb{N}$ the recurrence $F_{n+2}=F_{n+1}+F_n$. It is our (immodest) goal that due to our article the Catalan numbers become one of the best known nonexamples. 

\section{A proof by asymptotics.}\label{sec_asym}

The first, if not always the best, thought of an enumerative combinatorialist facing the task of showing that a sequence of integers is not a
linear recurrence sequence $(b_n)\subset\mathbb{Z}$ is to use the asymptotics of the sequence. One might say (we warn the reader that this argument is not rigorous) that since 
$$
C_n\sim c n^{-3/2}4^n\;\text{ and }\;b_n\sim d n^s\alpha^n\;\text{ as }\;n\to\infty\;,
$$
where $c>0$ is a real constant, $s\in\mathbb{N}_0$, and $d,\alpha>0$ are real algebraic numbers, $(C_n)$ is not a linear 
recurrence sequence because the two asymptotics are incompatible. The problem with this argument is that while the former asymptotic expression is correct (it follows for example 
from the closed formula for $C_n$ and the Stirling 
asymptotics for $n!$), the latter asymptotic expression is in general incorrect. It only holds 
when the expression for $b_n$ by a power sum (which we recall below) has a~so-called dominant root, a unique root with the maximum modulus $\alpha$. For linear recurrence sequences 
arising in enumerative combinatorics (for example, for the Fibonacci numbers) this is usually the case, but for a general linear recurrence sequence its 
power sum need not have dominant root. 

The {\em power sum} representation of a linear recurrence sequence $(b_n)\subset\mathbb{C}$, which generalizes the well-known Binet formula for the Fibonacci numbers $F_n$, is an expression of the form ($n\in\mathbb{N}$)
$$
b_n=\sum_{j=1}^r p_j(n)\alpha_j^n\;,
$$
where $r\in\mathbb{N}_0$, $p_j\in\mathbb{C}[x]$ are nonzero polynomials and $\alpha_j\in\mathbb{C}$ are mutually distinct nonzero numbers, the so-called roots of the power sum; for $r=0$ the sum 
is defined as $0$. For a proof that $b_n$ has such an expression see, for example, \cite[Chapter 4]{stan_EC1}. Thus the correct asymptotics for $b_n$ is 
$$
b_n=n^s\alpha^n\sum_{j=1}^l\gamma_j\beta_j^n+O(n^{s-1}\alpha^n)\;\text{ for }\;n\in\mathbb{N}\;,
$$
where $s\in\mathbb{N}_0$, $\alpha>0$ is the maximum modulus $|\alpha_j|$ of a root $\alpha_j$, $l\in\mathbb{N}_0$, $\gamma_j\in\mathbb{C}$ are nonzero numbers, and
$\beta_j\in\mathbb{C}$ are mutually distinct numbers with $|\beta_j|=1$.

Let $l>0$. For $n\in\mathbb{N}$ we define
$$
v(n)=\sum_{j=1}^l\gamma_j\beta_j^n
$$
and prove that $\limsup_{n\to\infty}|v(n)|>0$. Together with the trivial upper bound $|v(n)|=O(1)$, this saves the argument above and shows that indeed 
$C_n\sim cn^{-3/2}4^n$ cannot be expressed by a power sum and therefore $(C_n)$ is not a linear recurrence sequence. 

Suppose by way of contradiction that $\lim_{n\to\infty}v(n)=0$. This in particular means that for every $k\in\mathbb{N}$ there is an $n_k\in\mathbb{N}$ such that all $l$
values $|v(n_k+1)|,|v(n_k+2)|,\dots,|v(n_k+l)|$ are at most $1/k$. We solve the $l$ linear equations
$$
v(n_k+i)=\sum_{j=1}^l\gamma_j\beta_j^{n_k+i},\ i=1,\,2,\,\dots,\,l\;,
$$
for the $\gamma_j$. By Cramer's rule we get the formulas
$$
\gamma_j=\frac{\det(M_{j,k})}{\det\left(\left(\beta_m^{n_k+i}\right)_{i,m=1}^l\right)}=:\frac{A_{j,k}}{B_k},\ j=1,\,2,\,\dots,\,l\;,
$$
where the matrix $M_{j,k}$ arises from the matrix in the denominator by replacing the $j$th column with the column $(v(n_k+1)\;\cdots\;v(n_k+l))^T$.
It follows from the bounds $|\beta_m|=1$ and $|v(n_k+i)|\le 1/k$ and the definition of determinant that $|A_{j,k}|\le l!/k$. We take the power $\beta_m^{n_k+1}$ out of the $m$th column of the matrix in the denominator and obtain a Vandermonde matrix. The formula for the Vandermonde determinant shows that
$$
|B_k|=\prod_{1\le u<v\le l}|\beta_v-\beta_u|=:\beta>0\;,
$$
independently on $k$.
Thus $|\gamma_j|\le l!/\beta k$ for $j=1,2,\dots,l$. Letting $k$ go to $\infty$, we get that all $\gamma_j=0$, in contradiction with the assumption. 

\section{A proof by generating functions.}\label{sec_gf}

This is the shortest of the four proofs, at least when we take for granted that the $C_n$ have the generating function
$$
C(x)=\sum_{n=1}^{\infty}C_nx^n=\frac{1-\sqrt{1-4x}}{2}\;,
$$
and that the generating function of every linear recurrence sequence $(b_n)\subset\mathbb{C}$ is rational, i.e.,
$$
B(x)=\sum_{n=1}^{\infty}b_nx^n=\frac{p(x)}{q(x)}\;\text{ for some polynomials $p,q\in\mathbb{C}[x]$ with $q(0)\ne0$}\;.
$$
The formula for $C(x)$ follows easily from the relation $C_n=\sum_{j=1}^{n-1}C_jC_{n-j}$. For a~proof of rationality of $B(x)$, we refer again to
 \cite[Chapter 4]{stan_EC1}. We work in the ring $\mathbb{C}[[x]]$ of formal power series; thus $\sqrt{1-4x}$ means $\sum_{n=0}^{\infty}\binom{1/2}{n}(-4)^nx^n$
 and $\frac{p(x)}{q(x)}$ means $p(x)q(x)^{-1}$ where $q(x)^{-1}$ is the multiplicative inverse of $q(x)$ in $\mathbb{C}[[x]]$.
 
If $(C_n)$ were a linear recurrence sequence, then $C(x)$ would be rational and we would have the equality
 $$
 \sqrt{1-4x}=\frac{a(x)}{b(x)}\;\text{ for some polynomials $a,b\in\mathbb{C}[x]$ with $b(0)\ne0$}\;.
 $$
 Also, $a(x)\ne0$. Hence $b(x)^2(1-4x)=a(x)^2$. This is impossible, as the left-hand side has an odd degree but the right-hand side has an even degree. This argument resembles the well-known proof of the irrationality of $\sqrt{2}$ taught in courses 
of mathematical analysis. This proof may be the most folkloric one of the four since irrationality of $C(x)$ is well known; see 
for example the course notes \cite[p.~1]{pak}.

\section{A proof by number theory.}\label{sec_NT}

To fulfill our promise, before we give the third proof, we prove a result relating the domain of terms of a linear recurrence 
sequence and the domain of coefficients in the recurrence. The result is well known, at least to the researchers interested 
in linear recurrence sequences, but its proof cannot be found easily in the literature, and thus it may be of some interest to present it here. We need 
the next standard lemma from linear algebra (or perhaps from number theory? see Siegel's lemma), whose proof we leave to the reader as an exercise.

\begin{lemma}
Let $K$ be a field, $m,n\in\mathbb{N}$ with $m<n$, and $a_{i,j}\in K$ for $i=1,2,\dots,m$ and $j=1,2,\dots,n$ 
be $mn$ elements. Then there exist $n$ elements $x_1,x_2,\dots, x_n$ 
in $K$, not all equal to $0_K$, such that for every $i=1,2,\dots,m$, 
$$
\sum_{j=1}^n a_{i,j}x_j=0_K\;.
$$
\end{lemma}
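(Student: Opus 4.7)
The plan is to prove the lemma by induction on the number $m$ of equations, which amounts to a single step of Gaussian elimination at each stage. The base case $m=1$ with $n \ge 2$ is immediate: if every $a_{1,j}$ equals $0_K$, any nonzero tuple satisfies the equation; otherwise some $a_{1,k}\ne 0_K$, and choosing any $j \ne k$, the tuple with $x_k = -a_{1,j}$, $x_j = a_{1,k}$, and the remaining entries $0_K$ is nonzero and a solution.

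For the inductive step, assume the conclusion holds whenever there are $m-1$ equations in strictly more than $m-1$ unknowns, and consider $m$ equations in $n > m$ unknowns. If every coefficient $a_{i,j}$ is $0_K$, any nonzero tuple works, so we may assume some coefficient is nonzero. After reindexing equations and variables we may suppose $a_{1,n} \ne 0_K$. I would then use the first equation to formally eliminate $x_n$ by writing
$$
x_n = -a_{1,n}^{-1}\sum_{j=1}^{n-1} a_{1,j}\,x_j,
$$
substitute this expression into equations $2,\dots,m$, and collect terms to obtain a new homogeneous system of $m-1$ equations in the $n-1$ variables $x_1,\dots,x_{n-1}$ with coefficients in $K$. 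Since $m-1 < n-1$, the inductive hypothesis supplies a nonzero tuple $(x_1,\dots,x_{n-1})$ satisfying this reduced system; defining $x_n$ via the displayed formula yields a tuple $(x_1,\dots,x_n)$ that is still nonzero (because $(x_1,\dots,x_{n-1})$ is) and that satisfies all $m$ of the original equations — equation $1$ by construction and equations $2,\dots,m$ because substitution preserves their vanishing.

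There is no real obstacle here; the main thing to check is the routine bookkeeping that substituting back into the reduced system reproduces the original equations, which follows because no information has been discarded. An even shorter route would be to invoke the rank–nullity theorem: the linear map $K^n \to K^m$ given by the coefficient matrix has kernel of dimension at least $n - m \ge 1$. The elementary inductive argument is preferable here since the authors explicitly call the lemma ``standard'' and leave it as an exercise, suggesting a self-contained derivation from field axioms rather than an appeal to finite-dimensional vector-space machinery.
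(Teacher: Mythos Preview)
Your argument is correct: the induction on $m$ via one step of Gaussian elimination is a clean, self-contained proof, and the alternative one-line appeal to rank--nullity you mention is also valid. There is nothing to compare against, since the paper explicitly leaves this lemma ``to the reader as an exercise'' and gives no proof of its own.
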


\begin{theorem}\label{field_exte}
Suppose that $K\subset L$ is an extension of fields and $(b_n)\subset K$ is a linear recurrence sequence given by a recurrence $$b_{n+k}=\sum_{j=0}^{k-1}a_jb_{n+j},\ n\in\mathbb{N}\;,$$ 
with $k\in\mathbb{N}_0$ and all coefficients $a_j\in L$. Then $(b_n)$ satisfies another recurrence 
$$b_{n+k'}=\sum_{j=0}^{k'-1}a_j'b_{n+j},\ n\in\mathbb{N}\;,$$
with $k'\in\mathbb{N}_0$, $k'\le k$, and all coefficients $a_j'\in K$.
\end{theorem}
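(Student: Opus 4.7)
The plan is to use the preceding lemma to produce a candidate $K$-linear relation among $k+1$ consecutive shifts of $(b_n)$ that is forced to hold on the first $k$ starting positions, and then to bootstrap this initial relation to every position by exploiting the given $L$-recurrence.

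Assuming $k \ge 1$ (the case $k = 0$ gives $b_n = 0$ for every $n$ and is handled by taking $k' = 0$), I would first apply the lemma to the homogeneous system of $k$ equations in the $k+1$ unknowns $c_0, c_1, \ldots, c_k$ given by
\[
\sum_{j=0}^{k} c_j\, b_{i+j} = 0, \qquad i = 1, 2, \ldots, k,
\]
whose coefficients $b_{i+j}$ lie in $K$. Since $k < k+1$, the lemma furnishes $(c_0, \ldots, c_k) \in K^{k+1} \setminus \{0\}$ satisfying every equation of the system.

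Next, set $T(n) := \sum_{j=0}^{k} c_j\, b_{n+j}$, so that by construction $T(1) = T(2) = \cdots = T(k) = 0$. The central computation is
\[
T(n+k) = \sum_{j=0}^{k} c_j\, b_{n+k+j} = \sum_{j=0}^{k} c_j \sum_{i=0}^{k-1} a_i\, b_{n+j+i} = \sum_{i=0}^{k-1} a_i\, T(n+i),
\]
where the middle equality applies the given $L$-recurrence $b_{m+k} = \sum_{i=0}^{k-1} a_i\, b_{m+i}$ with $m = n+j$ and the last step swaps the order of summation. Thus $(T(n))$ itself satisfies the same order-$k$ linear recurrence as $(b_n)$; combined with the $k$ vanishing initial values, an easy induction on $n$ forces $T(n) = 0$ for every $n \in \mathbb{N}$.

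To finish, let $k'$ be the largest index in $\{0, 1, \ldots, k\}$ with $c_{k'} \ne 0$, which exists since not every $c_j$ vanishes and satisfies $k' \le k$. Setting $a_j' := -c_j / c_{k'} \in K$ for $j = 0, 1, \ldots, k'-1$ and rearranging the identity $T(n) = 0$ yields $b_{n+k'} = \sum_{j=0}^{k'-1} a_j'\, b_{n+j}$ for every $n \in \mathbb{N}$, the desired recurrence over $K$. The main step I expect to require care is the propagation argument: it is where one sees that the finite linear relation supplied by the lemma, together with the ambient $L$-recurrence, promotes itself to an identity on every shift, precisely because $(T(n))$ inherits the $L$-recurrence from $(b_n)$ but with a zero initial segment long enough to force identical vanishing.
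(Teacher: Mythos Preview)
Your proof is correct, but it follows a genuinely different route from the paper's. The paper argues via dimension: it considers the full set $B=\{(b_n,\ldots,b_{n+k}):n\in\mathbb{N}\}\subset K^{k+1}$, observes that the existence of the $L$-relation $(a_0,\ldots,a_{k-1},-1)$ forces $\dim_L B\le k$, picks a maximal $L$-independent subset $B_0\subset B$ (so $|B_0|\le k$ and $B_0\subset K^{k+1}$), and applies the lemma over $K$ to find a nonzero $\overline{a^*}\in K^{k+1}$ orthogonal to $B_0$; orthogonality then extends to all of $B$ by $L$-linearity. You instead apply the lemma only to the \emph{first} $k$ windows $(b_i,\ldots,b_{i+k})$, $i=1,\ldots,k$, and then propagate: the key observation that $T(n)=\sum_j c_j b_{n+j}$ inherits the same order-$k$ $L$-recurrence, combined with $T(1)=\cdots=T(k)=0$, forces $T\equiv 0$ by induction. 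Your argument is more hands-on and avoids any talk of dimension or bases (in particular the mild subtlety of mixing $L$-span with $K$-coefficients); the paper's argument is more structural and makes transparent \emph{why} a $K$-relation must exist, namely that the window space is already too small over $L$. Both are short and valid; yours has the minor advantage of using the lemma in the most naive way (first $k$ equations, $k+1$ unknowns) rather than first locating a basis.
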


\begin{proof}
For every $n\in\mathbb{N}$,
$$
\sum_{j=0}^k a_jb_{n+j}=0_L\;\text{ where }\;a_k:=-1_L\;.
$$
We set 
$$
B=\{\overline{b}=(b_n,\,b_{n+1},\,\dots,\,b_{n+k})\;|\;n\in\mathbb{N}\}\subset L^{k+1}\;\text{ and }\;
d=\dim_L(B)\in\mathbb{N}_0\;,
$$ 
where we understand $L^{k+1}$ to be a vector space over $L$. Since for every tuple $\overline{b}\in B$ the nonzero 
$(k+1)$-tuple $\overline{a}=(a_0,a_1,\dots,a_k)$ 
satisfies $\langle\overline{a},\overline{b}\rangle=0_L$ (scalar product), we have $d<k+1$. Let $B_0\subset B$ be a maximal linearly independent subset; thus 
$\dim_L(B_0)=|B_0|=d$ and every tuple $\overline{b}\in B$ is an $L$-linear combination of the tuples in $B_0$. 
Since $B_0\subset K^{k+1}$, by the 
previous lemma there exists a nonzero vector $\overline{a^*}\in K^{k+1}$ such that 
$\langle\overline{a^*},\overline{b}\rangle=0_K$ for every $\overline{b}\in B_0$. Every tuple in $B$ is a linear combination 
of the tuples in $B_0$, and therefore we have that
$\langle\overline{a^*},\overline{b}\rangle=0_K$ even for every $\overline{b}\in B$.
Let $\overline{a^*}=(a_0^*,a_1^*,\dots,a_k^*)$ and $k'\in\mathbb{N}_0$ be maximum with $a_{k'}^*\ne0_K$. 
For $j=0,1,\dots,k'-1$ we set $a_j'=-a_j^*/a_{k'}^*\in K$. Then for every $n\in\mathbb{N}$,
$$
b_{n+k'}=\sum_{j=0}^{k'-1}a_j'b_{n+j}\;.
$$
\end{proof}

Thus by Theorem~\ref{field_exte}
for any linear recurrence sequence of integers we may assume that the recurrence coefficients are fractions. 

We proceed to the third proof and assume for contradiction that for some $k$ in $\mathbb{N}_0$ and $a_0,a_1,\dots,a_{k-1}$ in $\mathbb{Q}$, $a_k=-1$, the relation
\begin{equation}\label{two}
    \sum_{j=0}^k a_j C_{n+j}=0
\end{equation}
holds for every $n\in\mathbb{N}$. 
We make use of the fact that the odd $C_n$ are infinite in number but increasingly isolated. The reader can see that of the numbers $C_1,C_2,\dots, C_{12}$ given 
in the Introduction, odd ones are $C_1,C_2,C_4$, and $C_8$. Indeed, in general
$$
\text{$C_n$ is odd if and only if $n=2^m$ for some $m\in\mathbb{N}_0$}\;.
$$
This follows easily from the relation $C_n=\sum_{j=1}^{n-1}C_jC_{n-j}$. Indeed, it shows that if $n>1$ is odd then $C_n$ is even (but $C_1=1$), and that if $n$ is even then 
$C_n\equiv C_{n/2}^2\equiv C_{n/2}$ modulo $2$. The equivalence now follows by writing an $n\in\mathbb{N}$ as $n=2^m r$ with $m\in\mathbb{N}_0$ and odd
$r\in\mathbb{N}$. This result on parity of $C_n$ is certainly well known but we do not know any reference to its origin. 

Multiplying the above relation (\ref{two}) by an appropriate natural number, we clear the denominators and assume without loss of generality that the numbers $a_0,a_1,\dots,a_k$ are mutually coprime integers. In particular, some coefficient $a_l$ must be odd. We take a large enough $n\in\mathbb{N}$ such that
$n+l$ is the sole power of $2$ among $n,n+1,\dots,n+k$. The above relation is then impossible because the sum contains exactly one odd summand, $a_lC_{n+l}$, all other summands are even, and the sum should equal to $0$.

\section{A proof by polynomials.}\label{sec_poly}

We thought of this proof as the last one but in retrospect it is quite natural. We assume for contradiction that for
some $k\in\mathbb{N}_0$, $a_0,a_1,\dots,a_{k-1}\in\mathbb{C}$, and $a_k=-1$, the relation
$$
\sum_{j=0}^k a_jC_{n+j}=0
$$
holds for every $n\in\mathbb{N}$. We substitute for each $C_{n+j}$ the formula $\frac{1}{n+j}\binom{2n+2j-2}{n+j-1}$
and after simplification get that $p(n)=0$ for a nonzero polynomial $p\in\mathbb{C}[x]$ and every $n\in\mathbb{N}$, which is impossible. 
This may be viewed as a very simple instance of the polynomial methods exposed in L.~Guth's book \cite{guth}. 

In more detail, for a variable $x$ and $k\in\mathbb{N}_0$ we consider the descending products $(x)_0:=1$ and for $k>0$,
$$
(x)_k:=x(x-1)(x-2)\cdots(x-k+1)\;.
$$
It is a monic polynomial in $x$ with degree $k$. As we said, in the above displayed relation we set 
$$
C_{n+j}=\frac{1}{n+j}\cdot\frac{(2n+2j-2)!}{(n+j-1)!^2}
$$ 
and, to get rid of common factors and denominators, multiply it by 
$$
(n+k)_{k+1}\cdot\frac{(n+k-1)!^2}{(2n-2)!}\;. 
$$
From this we get the relation
$$
p(n):=\sum_{j=0}^k a_j\cdot\frac{(n+k)_{k+1}}{n+j}\cdot(n+k-1)_{k-j}^2\cdot(2n+2j-2)_{2j}=0
$$
holding for every natural number $n$. The remaining crucial step is to show
that $p(x)$ is not the zero polynomial. This is not so obvious because each of the $k+1$ summands (without $a_j$) is a nonzero polynomial in $n$ with 
the same degree $3k$ and a~cancellation might occur. But the evaluation at $x=n:=-k$ shows that
$$
p(-k)=(-1)\cdot(-1)_k\cdot1\cdot(-2)_{2k}\ne0
$$
because  all other terms vanish except for $j=k$.
Thus $p(x)\ne0$ and we get the contradiction showing that $(C_n)$ is not a linear recurrence sequence.

\begin{acknowledgment}{Acknowledgment.}Our thanks go to the two referees and also to the editor, for their careful reading 
of our manuscript and for the many improvements they suggested. 
\end{acknowledgment}

\begin{biog}
\item [Martin Klazar] teaches mathematics to students of computer science at Charles University in Prague. He enjoys walks, 
sometimes even jogging, on hill V\'\i tkov near his residence. \begin{affil} Charles University,
Faculty of Mathematics and Physics,
Department of Applied Mathematics,
Malostransk\'e n\'am. 25,
118 00 Praha 1,
Czech Republic\\ klazar@kam.mff.cuni.cz
\end{affil}

\item[Richard Horsk\'y] teaches mathematics at Prague University of Economics and Business. 
He enjoys random walks in forests near his house in B\v rezov\'a-Ole\v sko. \begin{affil} Prague University of Economics 
and Business, Faculty of Informatics and Statistics, Department of Mathematics, Ekonomick\'a 957,
148 00 Praha 4-Kunratice, Czech Republic\\
rhorsky@vse.cz
\end{affil}
\end{biog}

\end{document}